\newtheorem*{thm*}{Theorem}
\newtheorem{thm}{Theorem}
\newtheorem{cor}[thm]{Corollary}
\newcommand{\Z}{\mathbb{Z}}
\newcommand{\C}{\mathbb{C}}
\DeclareMathOperator{\Hom}{Hom}
\newcommand{\defining}[1]{\textbf{#1}}
\title{Topological criteria for schlichtness}
\author{Zach Teitler}
\email{zteitler@boisestate.edu}
\address{Department of Mathematics \\ 1910 University Drive \\ Boise State University \\ Boise, ID 83725-1555 \\ USA}
\date{December 2, 2010}
\subjclass[2010]{32T05}
\begin{document}

\bibliographystyle{amsalpha}       

\begin{abstract}
We give two sufficient criteria for schlichtness of envelopes of holomorphy in terms of topology.
These are weakened converses of results of Kerner and Royden.
Our first criterion generalizes a result of Hammond in dimension 2.
Along the way we also prove a generalization of Royden's theorem.
\end{abstract}

\maketitle

Let $\Omega \subseteq \C^n$ be a domain.
The \defining{envelope of holomorphy} of $\Omega$ is a pair $(\tilde\Omega, \pi)$
consisting of a connected Stein manifold $\tilde\Omega$ 
and a locally biholomorphic map $\pi \colon \tilde\Omega \to \C^n$,
together with a holomorphic inclusion $\alpha \colon \Omega \to \tilde\Omega$,
characterized by the following properties:
$\pi \circ \alpha$ is the identity, and
each holomorphic function $f$ on $\Omega$ has a unique holomorphic extension $F_f$ on $\tilde\Omega$ with $f = F_f \circ \alpha$.
Let $\Omega' = \pi( \tilde \Omega )$
and let $i = \pi \circ \alpha \colon \Omega \to \Omega'$.
The envelope of holomorphy $(\tilde \Omega, \pi)$ is \defining{schlicht} if $\pi \colon \tilde\Omega \to \Omega'$ is biholomorphic.
One would like to give conditions on $\Omega$ to have a schlicht envelope of holomorphy.

Two results of Kerner and Royden lead to necessary conditions.
Kerner~\cite{Kerner} has shown that
$\alpha_* \colon \pi_1(\Omega) \to \pi_1(\tilde\Omega)$ is surjective.
Royden~\cite{MR0152675} has shown that
$\alpha^* \colon H^1(\tilde\Omega ; \Z) \to H^1(\Omega ; \Z)$ is injective.
It follows trivially that if $(\tilde \Omega, \pi)$ is schlicht, so $\tilde\Omega = \Omega'$,
then $i_* \colon \pi_1(\Omega) \to \pi_1(\Omega')$ is surjective
and $i^* \colon H^1(\Omega' ; \Z) \to H^1(\Omega ; \Z)$ is injective.

Neither of these conditions is sufficient, by a result of Fornaess and Zame~\cite{FZ} (see \cite[\textsection3]{hammond}).
Following an idea of Hammond \cite{hammond} one may seek sufficient conditions by
adjoining to surjectivity of $i_*$ (or injectivity of $i^*$) the assumption that $\pi \colon \tilde\Omega \to \Omega'$ is a covering space.
This strong assumption is still reasonable, as covering maps certainly occur among envelopes of holomorphy ---
indeed, Fornaess and Zame show in \cite{FZ}
that for any covering map $\pi \colon \tilde\Omega \to \Omega'$ there is a domain $\Omega \subseteq \Omega'$
with envelope of holomorphy $(\tilde\Omega, \pi)$.

Specifically, Hammond has shown that, in dimension $n=2$,
if $i_* \colon \pi_1(\Omega) \to \pi_1(\Omega')$ is surjective and $\pi \colon \tilde\Omega \to \Omega'$ is a covering map,
then $(\tilde\Omega, \pi)$ is schlicht.
We give an elementary proof of Hammond's theorem in all dimensions $n \geq 2$.
In addition we give a sufficient condition for schlichtness in terms of injectivity of $i^*$ on cohomology,
again assuming $\pi$ is a covering map.
Along the way we give an alternative proof of Royden's theorem which also extends it to other coefficient groups than $\Z$.

\begin{thm}
If $\pi$ is a covering map and $i_* \colon \pi_1(\Omega) \to \pi_1(\Omega')$ is surjective
then $(\tilde\Omega, \pi)$ is schlicht.
\end{thm}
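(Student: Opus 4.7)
The plan is to combine Kerner's theorem (quoted earlier) with basic covering-space theory. Fix a basepoint $p \in \Omega$ and use $\alpha(p) \in \tilde\Omega$, $i(p) \in \Omega'$ as compatible basepoints, so that the factorization $i = \pi \circ \alpha$ induces $i_* = \pi_* \circ \alpha_*$ on fundamental groups.

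The key step is to show that $\pi_* \colon \pi_1(\tilde\Omega) \to \pi_1(\Omega')$ is an isomorphism. Injectivity is automatic since $\pi$ is a covering map. For surjectivity I would use Kerner's theorem: $\alpha_*$ is surjective, and by hypothesis so is $i_* = \pi_* \circ \alpha_*$, so $\pi_*$ must be surjective as well.

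Once $\pi_*$ is an isomorphism, I would invoke the standard fact that a connected covering $\pi \colon \tilde\Omega \to \Omega'$ has number of sheets equal to the index $[\pi_1(\Omega') : \pi_*\pi_1(\tilde\Omega)]$. Since $\tilde\Omega$ is connected (being a Stein manifold in the definition of envelope of holomorphy) and $\Omega'$ is open and connected in $\C^n$, so path-connected and locally path-connected, this applies and shows $\pi$ is a one-sheeted covering, i.e.\ a homeomorphism.

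Finally, a locally biholomorphic homeomorphism is a biholomorphism, so $(\tilde\Omega, \pi)$ is schlicht. I do not expect a serious obstacle: the argument is essentially bookkeeping on top of Kerner's theorem. The only thing to watch is to ensure the topological hypotheses on $\Omega'$ needed to identify sheet number with subgroup index are in place, but this is immediate from $\Omega' \subseteq \C^n$ being an open connected set.
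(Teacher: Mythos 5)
Your argument is correct and is essentially the paper's own proof: surjectivity of $i_* = \pi_* \circ \alpha_*$ forces $\pi_*$ to be surjective, so the sheet number, being the index of $\pi_*\pi_1(\tilde\Omega)$ in $\pi_1(\Omega')$, equals $1$, and a locally biholomorphic homeomorphism is a biholomorphism. (Your appeal to Kerner's theorem is harmless but unnecessary, since surjectivity of the composite already gives surjectivity of $\pi_*$; the paper does not use Kerner here either.)
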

This extends the theorem of Hammond for dimension $n=2$.
Hammond's proof relies on a result of Jupiter~\cite{MR2221089} which is special to dimension $2$.

\begin{proof}
The number of sheets of the covering map $\pi$ is equal to the index of $\pi_*(\pi_1(\tilde\Omega))$ in $\pi_1(\Omega')$
(see, for example, \cite[Prop.~1.32]{MR1867354}).
The surjectivity of $i_* = \pi_* \circ \alpha_*$ implies $\pi_*$ is surjective.
Hence the index of the image subgroup is $1$, so $\pi \colon \tilde\Omega \to \Omega'$ is $1$-sheeted, i.e., a homeomorphism.
Since $\pi$ is a holomorphic homeomorphism it is biholomorphic and so $\tilde\Omega$ is schlicht.
\end{proof}
Compare the more technical proof in \cite{hammond}.

The cohomology in Royden's result is \v{C}ech cohomology with coefficients in the sheaf of locally constant $\Z$-valued functions.
Since our spaces are manifolds, \v{C}ech cohomology coincides with singular cohomology (with coefficients in $\Z$); see, for example, \cite[Thm.~73.2]{MR755006}.
Recall also that by the universal coefficient theorem,
$H^1(X ; G) = \Hom(\pi_1(X), G)$, for a path-connected space $X$ and abelian coefficient group $G$
\cite[pg.~98]{MR1867354}.

Before we go on, observe that this proves Royden's theorem as a consequence of Kerner's theorem
and extends it to other coefficient groups.
\begin{thm}[Royden]
For any abelian group $G$, $\alpha^* \colon H^1(\Omega ; G) \to H^1(\tilde\Omega ; G)$ is injective.
\end{thm}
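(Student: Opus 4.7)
The plan is to deduce the statement directly from Kerner's theorem, using the universal coefficient identification recalled immediately before the statement. The first step is to invoke the naturality of the isomorphism $H^1(X; G) \cong \Hom(\pi_1(X), G)$ for path-connected $X$ and abelian $G$: this identifies the pullback $\alpha^*$ (which, since $\alpha \colon \Omega \to \tilde\Omega$, really goes $H^1(\tilde\Omega; G) \to H^1(\Omega; G)$) with the precomposition map $\varphi \mapsto \varphi \circ \alpha_*$ from $\Hom(\pi_1(\tilde\Omega), G)$ to $\Hom(\pi_1(\Omega), G)$.

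The second step is to quote Kerner's theorem, which asserts that $\alpha_* \colon \pi_1(\Omega) \to \pi_1(\tilde\Omega)$ is surjective. The third and final step is the elementary observation that precomposition with a surjection of groups is injective on $\Hom$-sets: if $\varphi \circ \alpha_* = \psi \circ \alpha_*$, then $\varphi$ and $\psi$ agree on the image of $\alpha_*$, which is all of $\pi_1(\tilde\Omega)$, so $\varphi = \psi$. Chaining the three steps yields injectivity of $\alpha^*$.

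I do not anticipate a real obstacle. The whole argument is formal once Kerner's theorem is in hand, and the coefficient group $G$ enters only through its being abelian; this is precisely why the extension from $G = \Z$ to arbitrary abelian $G$ is free, and appears as a bonus of replacing Royden's sheaf-theoretic argument with one based on $\pi_1$.
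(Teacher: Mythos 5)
Your proposal is correct and is essentially identical to the paper's own proof: both identify $\alpha^*$ with precomposition by $\alpha_*$ via the universal coefficient theorem, invoke Kerner's surjectivity of $\alpha_*$, and conclude by the formal fact that precomposition with a surjection is injective on $\Hom$. Your parenthetical correction of the direction of $\alpha^*$ (it should map $H^1(\tilde\Omega; G) \to H^1(\Omega; G)$, matching the paper's earlier citation of Royden) is also right.
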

\begin{proof}
Since $\alpha_* \colon \pi_1(\Omega) \to \pi_1(\tilde\Omega)$ is surjective,
$\alpha^* \colon \Hom(\pi_1(\Omega), G) \to \Hom(\pi_1(\tilde\Omega), G)$ is injective
and these $\Hom$ groups coincide with $H^1(\Omega ; G)$, $H^1(\tilde\Omega ; G)$.
\end{proof}
Royden proves this for $G = \Z$ using \v{C}ech cohomology, in particular the exponential short exact sequence (hence the restriction to $G = \Z$).

We get the following.

\begin{thm}
If $\pi$ is a covering map,
$\pi_1(\Omega')$ is nilpotent,
and
$i^* \colon H^1(\Omega' ; G) \to H^1(\Omega ; G)$ is injective for every abelian group $G$,
then $(\tilde\Omega, \pi)$ is schlicht.
\end{thm}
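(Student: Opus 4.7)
The plan is to follow the template of the proof of the first theorem. Since $\pi$ is a covering map, schlichtness is equivalent to $\pi$ being one-sheeted, i.e.\ to $H := \pi_*(\pi_1(\tilde\Omega))$ being all of $\pi_1(\Omega')$. Because $i_* = \pi_* \circ \alpha_*$ one has $\operatorname{im}(i_*) \subseteq H$, so it will suffice to show that under the hypotheses this containment forces $H = \pi_1(\Omega')$.

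The next step is to translate the cohomological hypothesis into group theory via $H^1(X;G) = \Hom(\pi_1(X), G)$. The map $i^* \colon \Hom(\pi_1(\Omega'), G) \to \Hom(\pi_1(\Omega), G)$ is precomposition with $i_*$; since $G$ is abelian, a homomorphism $\pi_1(\Omega') \to G$ lies in its kernel exactly when it vanishes on the normal subgroup $M := \operatorname{im}(i_*) \cdot [\pi_1(\Omega'), \pi_1(\Omega')]$. Applying the injectivity hypothesis with $G = \pi_1(\Omega')/M$ and the quotient map forces $M = \pi_1(\Omega')$. Combined with $\operatorname{im}(i_*) \subseteq H$, this gives $H \cdot [\pi_1(\Omega'), \pi_1(\Omega')] = \pi_1(\Omega')$.

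The final step is the group-theoretic lemma that in a nilpotent group $\Gamma$, any subgroup $K$ satisfying $K \cdot [\Gamma, \Gamma] = \Gamma$ equals $\Gamma$. I would prove this by induction on the nilpotency class: the abelian case is trivial; in general one passes to $\Gamma/Z(\Gamma)$, where the induction hypothesis yields $K \cdot Z(\Gamma) = \Gamma$, and centrality then gives $[\Gamma, \Gamma] = [K \cdot Z(\Gamma), K \cdot Z(\Gamma)] = [K,K] \le K$, so $K = K \cdot [\Gamma, \Gamma] = \Gamma$. Applied with $K = H$ and $\Gamma = \pi_1(\Omega')$ this gives $H = \pi_1(\Omega')$ and hence schlichtness.

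The cohomology-to-abelianization translation is routine and the covering-map reduction follows the first theorem verbatim; the substantive obstacle --- and the unique place where nilpotence enters --- is the group-theoretic lemma in the last step. Nilpotence cannot be dropped: in a simple nonabelian group $\Gamma$ one has $[\Gamma, \Gamma] = \Gamma$, so $H^1(\Omega';G) = 0$ for every abelian $G$ and the cohomological hypothesis imposes no constraint whatever.
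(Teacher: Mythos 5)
Your proof is correct, and while it shares the paper's overall strategy --- reduce schlichtness to the covering being one-sheeted, and convert the cohomological hypothesis into a statement about homomorphisms out of $\pi_1(\Omega')$ via $H^1(X;G) = \Hom(\pi_1(X),G)$ --- the group-theoretic core is genuinely different. Writing $\Gamma = \pi_1(\Omega')$: the paper passes from $i^*$ to $\pi^*$, supposes $\pi_1(\tilde\Omega) \subsetneqq \Gamma$, chooses a maximal proper subgroup $N$ containing it, and invokes the fact that maximal subgroups of nilpotent groups are normal of prime index to exhibit a nonzero element of $\ker \pi^*$ with values in $\Gamma/N$. You instead work directly with $\operatorname{im}(i_*)$, use the single coefficient group $G = \Gamma/(\operatorname{im}(i_*)\cdot[\Gamma,\Gamma])$ to conclude $H\cdot[\Gamma,\Gamma]=\Gamma$ for $H = \pi_*(\pi_1(\tilde\Omega))$, and finish with the lemma that a subgroup of a nilpotent group which generates it together with the commutator subgroup must be the whole group (equivalently, $[\Gamma,\Gamma]$ lies in the Frattini subgroup), proved by induction on nilpotency class; that induction is correct as you state it. Your route buys two things: it makes explicit that injectivity for one specific abelian quotient suffices, which the paper only observes in a remark after the proof; and it avoids needing a maximal subgroup over a given proper subgroup, whose existence is automatic for finitely generated nilpotent groups but can fail in general ($\mathbb{Q}$ has no maximal subgroups), a point worth caring about since fundamental groups of domains need not be finitely generated. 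Your closing observation that the hypothesis becomes vacuous when $\Gamma$ is perfect correctly locates where nilpotence enters, though it shows only that this proof strategy breaks down there, not that the theorem itself fails.
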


\begin{proof}
Since $i^* = \alpha^* \circ \pi^*$ is injective, $\pi^*$ is injective as well.
Via $\pi_*$ we regard $\pi_1(\tilde\Omega)$ as a subgroup of $\pi_1(\Omega')$.
Recall that if $H$ is any nilpotent group then every maximal proper subgroup $N$ of $H$ is normal and has prime index (see \cite[Thm.~5.40]{MR1307623}),
and in particular $H/N$ is abelian.
If $\pi_1(\tilde\Omega) \subsetneqq \pi_1(\Omega')$ there is a maximal subgroup $\pi_1(\tilde\Omega) \subseteq N \subsetneqq \pi_1(\Omega')$
and hence a surjection $\pi_1(\Omega') \to G = \pi_1(\Omega') / N$ to an abelian group with $\pi_1(\tilde\Omega)$ mapping to zero.
This surjection is nonzero and lies in the kernel of
\[
  \pi^* \colon H^1(\Omega' ; G) = \Hom(\pi_1(\Omega'), G) \to \Hom(\pi_1(\tilde\Omega), G) = H^1(\tilde\Omega ; G)
\]
for the abelian group $G = \pi_1(\Omega') / N$, contradicting the injectivity of $\pi^*$.

It follows that $\pi_1(\tilde\Omega) = \pi_1(\Omega')$.
As before this implies $\pi$ is a degree $1$ covering map, hence a biholomorphism.
\end{proof}

It is not necessary to assume $i^*$ is injective when coefficients are taken in any abelian group $G$.
It would be enough to assume $i^*$ is injective when coefficients are taken in any finite cyclic group,
in any abelian quotient $G$ of $\pi_1(\Omega')$,
or even just in a single abelian quotient $G = \pi_1(\Omega') / N$ for some proper normal subgroup $N$
containing $\pi_1(\tilde\Omega)$.

If in addition $\pi \colon \tilde\Omega \to \Omega'$ is a normal covering space then $\pi_1(\tilde\Omega) \subseteq \pi_1(\Omega')$
is a normal subgroup and we can take $G$ to be an abelian quotient of
$\pi_1(\Omega') / \pi_1(\tilde\Omega)$, which is the group of deck transformations.

\begin{cor}
Suppose $\pi$ is a normal covering map with deck transformation group $H$.
If there exists a nonzero abelian quotient $G$ of $H$ such that
$i^* \colon H^1(\Omega' ; G) \to H^1(\Omega ; G)$ is injective,
then $(\tilde\Omega, \pi)$ is schlicht.
\end{cor}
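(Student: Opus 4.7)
The plan is to run essentially the same argument as in the preceding theorem, but with the nonzero abelian quotient of the deck group handed to us by hypothesis rather than constructed via a maximal-subgroup argument inside a nilpotent group.

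First, since $i = \pi \circ \alpha$, injectivity of $i^* \colon H^1(\Omega'; G) \to H^1(\Omega; G)$ implies injectivity of $\pi^* \colon H^1(\Omega'; G) \to H^1(\tilde\Omega; G)$, exactly as in the proof of the previous theorem.

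Next I would unpack the normal-covering hypothesis: the subgroup $\pi_*(\pi_1(\tilde\Omega)) \subseteq \pi_1(\Omega')$ is normal, with quotient equal to the deck transformation group $H$. Composing the quotient $\pi_1(\Omega') \twoheadrightarrow H$ with the given surjection $H \twoheadrightarrow G$ produces a surjection $\phi \colon \pi_1(\Omega') \twoheadrightarrow G$, nonzero because $G$ is nonzero, whose kernel contains $\pi_*(\pi_1(\tilde\Omega))$.

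Under the identification $H^1(X; G) = \Hom(\pi_1 X, G)$, the pullback $\pi^*$ is precomposition with $\pi_*$. Since $\phi$ vanishes on the image of $\pi_*$, we get $\pi^*(\phi) = 0$ with $\phi \neq 0$, contradicting the injectivity of $\pi^*$ just established. The only resolution is that there was no nontrivial $H$ to begin with: we must have $\pi_*(\pi_1(\tilde\Omega)) = \pi_1(\Omega')$, so $\pi$ is a one-sheeted covering, hence a homeomorphism, and (being holomorphic and locally biholomorphic) a biholomorphism; thus $(\tilde\Omega, \pi)$ is schlicht.

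I do not foresee a real obstacle: this is essentially a repackaging of the previous theorem in which the normal-covering hypothesis replaces the nilpotency assumption and removes the need to manufacture an abelian quotient by hand. The only care needed is bookkeeping with the standard identifications $H^1(X; G) = \Hom(\pi_1 X, G)$ and the fact that $\pi^*$ corresponds to $\phi \mapsto \phi \circ \pi_*$, both of which are already in play in the proofs above.
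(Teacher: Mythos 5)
Your proposal is correct and is exactly the argument the paper intends: the paper leaves the corollary's proof to the preceding discussion, which likewise composes $\pi_1(\Omega') \twoheadrightarrow H \twoheadrightarrow G$ to get a nonzero homomorphism killed by $\pi^*$, contradicting the injectivity inherited from $i^* = \alpha^* \circ \pi^*$. The only cosmetic point is that the contradiction already follows from the stated hypotheses (a nonzero quotient $G$ of $H$ forces $H \neq 1$), so the conclusion holds as you derive it; your bookkeeping with $H^1(X;G) = \Hom(\pi_1(X),G)$ and $\pi^*(\phi) = \phi \circ \pi_*$ matches the paper's.
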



I thank Chris Hammond for explaining his theorem to me, and Emil Straube, Craig Westerland, and Jens Harlander
for helpful and patient conversations.

\end{document}